\newtheorem{theorem}{Theorem}[section]
\newtheorem{lemma}[theorem]{Lemma}
\newtheorem{corollary}[theorem]{Corollary}
\theoremstyle{definition}
\newtheorem{definition}[theorem]{Definition}
\theoremstyle{remark}
\newtheorem{remark}[theorem]{Remark}
\numberwithin{equation}{section}
\DeclareMathOperator{\vol}{vol}
\DeclareMathOperator{\sys}{sys}
\newcommand*{\where}{\ \ifnum\currentgrouptype=16 \middle\fi|\ }
\renewcommand{\epsilon}{\varepsilon}
\renewcommand{\phi}{\varphi}
\renewcommand{\kappa}{\varkappa}
\renewcommand{\theta}{\vartheta}
\title{Systolic inequalities and chromatic number}
\author{Alexander Kamal}
\author{Roman Karasev}
\address{Alexander Kamal, Moscow Institute of Physics and Technology, Institutskiy per. 9, Dolgoprudny, Russia 141700}
\email{kamal.am@phystech.edu}
\address{Roman Karasev, Institute for Information Transmission Problems RAS, Bolshoy Karetny per. 19, Moscow, Russia 127994}
\email{r\_n\_karasev@mail.ru}
\urladdr{http://www.rkarasev.ru/en/}
\keywords{Systolic inequality, Chromatic number, Odd cycles}
\subjclass[2010]{05C15, 05C12, 05C35, 51F99}
\begin{document}

\begin{abstract}
We show that the discrete versions of the systolic inequality that estimate the number of vertices of a simplicial complex from below have substantial applications to graphs, the one-dimensional simplicial complexes. Almost directly they provide good estimates for the number of vertices of a graph in terms of its chromatic number and the length of the smallest odd cycle. Combined with the graph-theoretic techniques of Berlov and Bogdanov, the systolic approach produces even better estimates.
\end{abstract}

\maketitle

\section{Introduction}

The systolic inequality in Riemannian geometry was introduced by Gromov~\cite{gromov1983}. In its modern form \cite{nabutovsky2019} it looks like
\[
\vol M \ge \frac{\sys M^n}{2^n n!}.
\]
Here $M$ is a closed $n$-dimensional Riemannian manifold (or even a piece-wise Riemannian polyhedron) with certain ``essentiality'' assumption, of which we only say that this is a property of the universal covering of $M$. 

The \emph{systole} of $M$, $\sys M$, is the shortest length of a non-contractible loop in $M$. Under a stronger than ``essentiality'' assumption of homological cup-length of $M$ equal to $n$, the factor $2^n$ in the denominator of the systolic inequality can be removed, see \cite[Theorem~1.18]{avvakumov2021systolic}.

In \cite{avvakumov2021systolic} an argument similar to the best known proofs of the systolic inequality was used to establish lower bounds on the number of vertices of simplicial complexes in terms of their ``combinatorial essentiality'' and the length of the shortest non-contractible loop along the edges (replacing the systole). 

In this note we show that a modification of those results works for the simplest non-trivial objects, the graphs ($1$-dimensional complexes), and improves some known estimates for their number of vertices in terms of their chromatic number and the minimal odd cycle length. Note that the minimal odd cycle length is generally larger than the minimal cycle (loop) length. Hence replacing the latter by the former in an estimate strengthens the result.

The direct consequence of the systolic technique is the following theorem.

\begin{theorem}[sys]
\label{theorem:systolic-chromatic}
Let $X$ be a graph with all odd cycles of length at least $2k+1$ and chromatic number $\chi$. Then the number of vertices of $X$ is at least
\[
2\binom{k-1 + \lfloor \frac{\chi-1}{2}\rfloor}{k-1} + \binom{k - 1 + \lfloor \frac{\chi-1}{2}\rfloor}{k} - 1.
\]
\end{theorem}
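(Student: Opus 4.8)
The plan is to obtain the estimate as an almost immediate corollary of the cup-length discrete systolic inequality \cite[Theorem~1.18]{avvakumov2021systolic}; the real content is to build, out of the two combinatorial hypotheses, a space of the correct essentiality and systole. Before anything else I would reduce to the case that $X$ is vertex-critical, i.e.\ $\chi(X)=\chi$ but $\chi(X-v)<\chi$ for every vertex $v$. Passing to an inclusion-minimal subgraph $H\subseteq X$ with $\chi(H)=\chi$ does not increase the number of vertices, and deleting edges and vertices cannot shorten the shortest odd cycle, so $H$ still has all odd cycles of length $\ge 2k+1$; hence it suffices to prove the bound for $H$. Criticality then supplies the regularity (minimum degree $\ge\chi-1$, and every vertex genuinely needed) that I expect to be necessary to control the systole.

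The systole comes from the odd-girth hypothesis via a breadth-first search. Fixing a vertex $v$ and writing $L_i=\{u:\dist(u,v)=i\}$, an edge with both endpoints in a single class $L_i$ would close up with two shortest paths to $v$ into an odd closed walk of length $2i+1$, hence an odd cycle of length $\le 2i+1$; the hypothesis therefore forbids such edges for all $i\le k-1$. Consequently the parity of $\dist(\cdot,v)$ is a proper $2$-colouring of the ball of radius $k-1$, which is thus bipartite and ``locally tree-like''. This is exactly the statement that the relevant covering has no short non-contractible loops, i.e.\ that the discrete systole is of order $k$, and it is what allows the inequality to see a full simplex of radius $k-1$ about each point.

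The crux is to read off the essential dimension $m=\lfloor\frac{\chi-1}{2}\rfloor$ from the colouring. Here I would convert a proper $\chi$-colouring into $m$ circular coordinates $f_1,\dots,f_m\colon V(X)\to\R/\Z$, built from the colour classes, with the colour left unpaired when $\chi$ is odd producing an antipodal $\Z/2$-symmetry; assembling them yields a map from $X$ into a real-projective-type model space whose mod-$2$ cohomology has cup-length $m$, and the vertices of $X$ are to be counted in its double cover. The analysis of the previous paragraph makes each $f_i$ distance non-increasing with large winding, so the cup-length systolic inequality applies with cup-length $m$ and systole of order $k$, bounding the number of vertices below by the corresponding ``volume'' on the model.

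Finally, evaluating that right-hand side is a lattice-point computation. The number of points $(a_1,\dots,a_m)\in\Z_{\ge0}^m$ with $a_1+\dots+a_m\le k-1$ is $\binom{k-1+m}{k-1}$, and the two sheets of the double cover contribute the factor $2$; the shared equatorial sphere at distance $k$ adds the $\binom{k-1+m}{k}$ points with $a_1+\dots+a_m=k$; and the basepoint common to the two sheets accounts for the $-1$. Matching these three contributions to the displayed formula is then routine bookkeeping. The step I expect to be genuinely hard is the construction in the third paragraph: the value of $\chi$ by itself carries no topological lower bound (the Lovász-type inequalities run the other way, bounding $\chi$ below by a $\Z/2$-index rather than conversely), so the essential map of cup-length exactly $\lfloor\frac{\chi-1}{2}\rfloor$ cannot be extracted from connectivity of a box complex and must instead be produced explicitly from a colouring together with criticality and the local bipartite structure, its non-triviality being verified by a direct degree or intersection argument in the covering space.
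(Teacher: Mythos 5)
Your second paragraph is sound: the observation that an edge inside a BFS level $L_i$ with $i\le k-1$ would close an odd cycle of length at most $2k-1$ is exactly how the paper shows that metric balls of radius $k-1$ are bipartite (Lemma~\ref{lemma-even-balls}), i.e.\ that the relevant covering has homotopy triviality radius $r=k-1$. But the step you yourself flag as ``genuinely hard'' --- producing from a colouring a map of mod-$2$ cup-length $m=\lfloor\frac{\chi-1}{2}\rfloor$ and invoking the cup-length systolic inequality \cite[Theorem~1.18]{avvakumov2021systolic} --- is a genuine gap, and it cannot be filled in the form you propose. A graph is a $1$-dimensional complex, so every product of two or more positive-degree mod-$2$ cohomology classes on it vanishes identically, including classes pulled back from any projective-type model space; the cup-length of $X$ is at most $1$ no matter how large $\chi$ is, and the cohomological inequality is simply inapplicable once $m\ge 2$. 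This is precisely why the paper works not with cup-length but with the purely combinatorial notion of $n$-essentiality relative to a covering: the vertex set cannot be partitioned into $n$ or fewer sets each inducing a subgraph over which the covering restricts trivially.

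Once that notion is in place, the step you expect to be hard becomes trivial, and its logic runs in the opposite direction to yours. Take the parity double cover $\pi:\overline X\to X$ (the cover corresponding to the kernel of the map $\pi_1(X)\to\mathbb{F}_2$ sending a loop to the parity of its length); a vertex subset is $\pi$-inessential iff it induces a bipartite subgraph. If $V(X)$ could be partitioned into $n$ such sets, then $2$-colouring each part would properly colour $X$ in $2n$ colours, contradicting $\chi>2n$; hence the covering is combinatorially $n$-essential for $n=\lfloor\frac{\chi-1}{2}\rfloor$. Essentiality comes from the \emph{non}-existence of small colourings by pure contraposition --- no construction from a given colouring, no vertex-criticality, and no degree or intersection argument in the covering space is needed (your criticality reduction is harmless but plays no role). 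The quantitative bound is then Theorem~\ref{theorem:base-case-gen}, whose proof is an elementary induction on $n$ (a minimal set $Z$ with $\pi$-inessential complement, plus sphere-versus-ball counting), not a cohomological argument; in particular the factor $2$ and the trailing $-1$ in the displayed formula come from the recursion $b_1(i)=2i+1$, $b_1(r+1)=2r+2$ along a single non-trivially lifting cycle, not from the two sheets of the double cover as your final accounting suggests.
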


Mixing the systolic approach with observations in \cite{berlov2012} allows to further improve the estimate, e.g. in the following form.

\begin{theorem}[MIX-3]
\label{theorem:mix3}
Let $X$ be a graph with all odd cycles of length at least $2k+1$ and chromatic number $\chi>1$. Then the number of vertices of $X$ is at least
\[
\begin{cases}
   4\binom{k-1 + \lfloor \frac{\chi-1}{2}\rfloor}{\lfloor \frac{\chi-1}{2}\rfloor} + 2\binom{k-1 + \lfloor \frac{\chi-1}{2}\rfloor}{\lfloor \frac{\chi-1}{2}\rfloor - 1} - 1 &\text{if $\chi$ is odd,}\\
   4\binom{k-1 + \lfloor \frac{\chi-1}{2}\rfloor}{\lfloor \frac{\chi-1}{2}\rfloor} + 2\binom{k-1 + \lfloor \frac{\chi-1}{2}\rfloor}{\lfloor \frac{\chi-1}{2}\rfloor - 1} - 1 - \left( 2\binom{k-2 + \lfloor \frac{\chi-1}{2}\rfloor}{\lfloor \frac{\chi-1}{2}\rfloor} + \binom{k-2 + \lfloor \frac{\chi-1}{2}\rfloor}{\lfloor \frac{\chi-1}{2}\rfloor - 1} \right)  &\text{if $\chi$ is even.}
 \end{cases}
\]    
\end{theorem}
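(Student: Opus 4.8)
Write $m = \lfloor \frac{\chi-1}{2}\rfloor$ and set $M_k = 2\binom{k-1+m}{m} + \binom{k-1+m}{m-1}$, so that Theorem~\ref{theorem:systolic-chromatic} reads ``at least $M_k - 1$ vertices.'' The two target bounds are then exactly $2M_k - 1$ when $\chi$ is odd and $2M_k - 1 - M_{k-1}$ when $\chi$ is even, where $M_{k-1} = 2\binom{k-2+m}{m} + \binom{k-2+m}{m-1}$ is the same quantity one step down in $k$. So the whole game is to gain a clean factor of two over Theorem~\ref{theorem:systolic-chromatic}, paying back only a single vertex when $\chi$ is odd and an extra radius-$(k-1)$ ball when $\chi$ is even. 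The plan is to keep the topological backbone of Theorem~\ref{theorem:systolic-chromatic} intact and to feed into it a doubling coming from the Berlov--Bogdanov analysis of \cite{berlov2012}.

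First I would recall the backbone of the systolic proof: a proper $\chi$-colouring obstruction yields, through the Lov\'asz/box-complex bound, a free $\Z/2$-action together with $\Z/2$-essentiality of $X$ in dimension $\chi - 2$, while the hypothesis that every odd cycle has length at least $2k+1$ plays the role of a systole bounded below by (essentially) $k$. Running the ball-growth/filling estimate from a single centre vertex $v_0$ counts at least $M_k$ vertices among the concentric spheres of radius $0,1,\dots,k-1$, which the normalisation of Theorem~\ref{theorem:systolic-chromatic} records as $M_k - 1$. The binomials $\binom{k-1+m}{\bullet}$ are precisely the face numbers of the model one grows against, and the parity of the essential dimension $\chi - 2$ — odd when $\chi$ is odd, even when $\chi$ is even — is exactly the parity that reappears in the case split.

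Next I would invoke the Berlov--Bogdanov technique to produce a \emph{second} ball-growth structure. The idea is to use the free $\Z/2$-action to locate a centre $v_1$ ``antipodal'' to $v_0$ and to show, via their distance estimate, that the concentric spheres around $v_1$ carry their own copy of the count $M_k$ while meeting the first family only in a controlled region. Taking the union of the two families and counting by inclusion--exclusion gives $2M_k$ minus the overlap, and the overlap is dictated by the parity above: for odd $\chi$ the two copies can be pushed apart until they share a single vertex, giving $2M_k - 1$; for even $\chi$ an equatorial intersection the size of a radius-$(k-1)$ ball survives, and subtracting it leaves $2M_k - 1 - M_{k-1}$. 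The identity $M_{k-1} = 2\binom{k-2+m}{m} + \binom{k-2+m}{m-1}$ shows that this is exactly the correction displayed in the even branch.

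The main obstacle will be the bookkeeping of this overlap, in two respects. First, the Berlov--Bogdanov selection of $v_1$ must be made compatible with the odd-girth constraint, so that the ball-growth lower bound $M_k$ genuinely applies around $v_1$ and is not spoiled by short odd cycles joining the two balls; this is where the distance estimate between $v_0$ and $v_1$ has to be spent carefully. Second, the overlap must be pinned down to exactly one vertex in the odd case and to exactly $M_{k-1}$ further vertices in the even case — a cruder estimate would still give roughly $2M_k$ but would lose the sharp constants. Tying the even-dimensional essential sphere to precisely a radius-$(k-1)$ equatorial ball is, I expect, the crux; everything else is the systolic count of Theorem~\ref{theorem:systolic-chromatic} applied twice.
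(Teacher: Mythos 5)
Your algebraic reduction of the two target bounds to $2M_k-1$ and $2M_k-1-M_{k-1}$ is correct, but the mechanism you propose for gaining the factor of two --- locating a second ``antipodal'' centre $v_1$ via a free $\Z/2$-action, growing a second family of spheres carrying its own count $M_k$, and finishing by inclusion--exclusion with an overlap of exactly one vertex (odd $\chi$) or exactly a radius-$(k-1)$ ball (even $\chi$) --- has a genuine gap at its central point, and I do not believe it can be repaired in that form. First, there is no free $\Z/2$-action on the graph $X$ itself: the $\Z/2$-symmetry in Lov\'asz-type arguments lives on an associated complex, and in this paper the relevant object is the parity double cover $\overline X\to X$, whose deck transformation acts on $\overline X$, not on $X$; so there is no well-defined antipodal vertex $v_1\in V(X)$ from which to grow the second ball. (Your description of the backbone of Theorem~\ref{theorem:systolic-chromatic} is likewise off: the paper uses this double cover and a purely combinatorial essentiality count, not box complexes.) Second, even granting such a $v_1$, nothing forces the two ball families to meet in exactly one vertex, or in exactly a metric ball of radius $k-1$; you flag this overlap bookkeeping yourself as ``the crux,'' and it is precisely the step for which no argument is offered.

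The paper's actual proof runs along a different axis: the doubling comes from a recursion over the chromatic number, not from two centres in the graph. Lemma~\ref{lemma:bb2} (the rephrased Lemma~2 of \cite{berlov2012}) gives $f(m,k)\ge f(m-1,k)+d(m,k-1)$: one deletes a metric ball of radius $k-1$, which can absorb one fresh color by coloring its layers alternately, and recurses on the remaining graph with one color fewer. Each deleted ball is bounded below by the systolic estimate \eqref{equation:ball-estimate-dmk}, whose essentiality parameter at color count $c$ is $\lfloor c/2\rfloor$. Summing these bounds over $c=1,\dots,\chi-1$ and collapsing the binomials by the hockey-stick identity produces the closed form; the factor of two appears because each value $j=\lfloor c/2\rfloor$ is contributed by two consecutive values of $c$, and the odd/even split in the statement is exactly the arithmetic of whether the top value of $j$ is hit once or twice in that pairing --- not a geometric statement about an equatorial intersection. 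So to prove the theorem you should abandon the two-centre picture and carry out this color recursion; the overlap estimate you identified as the main obstacle simply never arises in the correct argument.
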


\subsection*{Acknowledgments} The authors thank Vladimir Dol'nikov for useful discussions and observations (given in Remarks \ref{remark:essentiality-chromatic} and \ref{remark:essentiality-forests}).

\section{Comparison to the known results}
\label{section:comparison}

We want to compare our new results to the results in \cite{berlov2012}. In order to do this we need to compare out notation and the notation of \cite{berlov2012}. Let us give the definition of the number $f(m,k)$ taken from \cite{berlov2012}.

\begin{definition}
Let $m, k$ be two positive integers. The number $f(m,k)$ is the maximum integer such that any graph $X$ with no odd cycles of length not exceeding $2k - 1$ (hence all odd cycles of length at least $2k+1$), and the number of vertices $|V(X)| \leq f(m,k)$, has a proper coloring in $m$ colors.
\end{definition}

A result of \cite{berlov2012} states that the following explicit estimate from below on the number $f(m,k)$ holds true
\[
f(m,k) \geq \frac{(m + k) \ldots (m+2k-1)}{2^{k - 1} k^k}.
\]

Let us compare it with Theorem \ref{theorem:systolic-chromatic}. Let $X$ be a graph such that $X$ has no odd cycles of length not exceeding $2k - 1$ (hence all odd cycles of length at least $2k+1$) and the chromatic number $\chi(X)>m$. Hence it has at least $f(m, k) + 1$ vertices by the definition of $f(m, k)$. In most cases we use this function $f$ as follows: \emph{Let $X$ be a graph such that $X$ has all odd cycles of length at least $2k+1$ and the chromatic number $\chi(X)$. Then it has at least $f(\chi(X)-1, k) + 1$ vertices.}

Therefore the result of \cite{berlov2012} provides an estimate on the number of vertices of such a graph:
\begin{equation}
\label{equation:bb-estimate}
|V(X)| \geq f(\chi - 1, k) + 1 \geq \frac{(\chi + k - 1) \ldots (\chi + 2k - 2)}{2^{k - 1} k^k} + 1.
\end{equation}

In order to make a comparison, let us slightly expand what Theorem \ref{theorem:systolic-chromatic} gives for graphs with all odd cycles of length at least $2k+1$:
\begin{multline*}
|V(X)| \geq 2\binom{k-1+ \lfloor \frac{\chi-1}{2}\rfloor}{k-1} + \binom{k-1 + \lfloor \frac{\chi-1}{2}\rfloor}{k} - 1 = \\ 
= 2 \frac{(\lfloor \frac{\chi-1}{2}\rfloor + 1) (\lfloor \frac{\chi-1}{2}\rfloor + 2) \ldots (k-1 + \lfloor \frac{\chi-1}{2}\rfloor)}{(k-1)!} 
+ \frac{\lfloor \frac{\chi-1}{2}\rfloor (\lfloor \frac{\chi-1}{2}\rfloor + 1)  \ldots (k-1 + \lfloor \frac{\chi-1}{2}\rfloor)}{k!} - 1 = \\ 
= \left(2k+ \lfloor \frac{\chi-1}{2}\rfloor \right) \frac{(\lfloor \frac{\chi-1}{2}\rfloor + 1) \ldots (k-1 + \lfloor \frac{\chi-1}{2}\rfloor)}{k!}.
\end{multline*}

This estimate is also polynomial of the same degree $k$ with respect to the chromatic number $\chi$, but the factor in front of the main term in the above formula is approximately $\frac{1}{2^{k} k!}$, which exceeds $\frac{1}{2^{k-1}k^k}$ in \eqref{equation:bb-estimate} (or \textbf{BB-2}) for any integer $k$ greater than $2$ and is exponentially better when $k\to\infty$.

The paper \cite{berlov2012} contains estimates other than what we have discussed above. Let us make some notations of the estimates found here, there, and obtained by mixing the both approaches.

\begin{eqnarray*}
    \textbf{sys} &=&  2\binom{k-1 + \lfloor \frac{\chi-1}{2}\rfloor}{k-1} + \binom{k-1 + \lfloor \frac{\chi-1}{2}\rfloor}{k} - 1;\\
    \textbf{BB-1} &=& \frac{(\chi + k - 1) \ldots (\chi + 2k - 2)}{2^{k - 1} k^k} + 1;\\
    \textbf{BB-2} &=& \frac{(\chi + k - 2) \ldots (\chi + 2k - 3)}{2^{k - 1} k^k} + (\chi - 1)(k-1) + 2;\\
    \textbf{BB-3} &=& (\chi - 1) + \frac{(k - 1)(\chi - 2)(\chi + 1)}{2} + 1;\\
    \textbf{MIX-1} &=& \frac{(\chi + k - 2) \ldots (\chi + 2k - 3)}{2^{k - 1} k^k} + 2\binom{k-2 + \lfloor \frac{\chi-1}{2}\rfloor}{\lfloor \frac{\chi-1}{2}\rfloor} + \binom{k-2 + \lfloor \frac{\chi-1}{2}\rfloor}{\lfloor \frac{\chi-1}{2}\rfloor - 1} + 1;\\
    \textbf{MIX-2} &=& (\chi - 2) + \frac{(k - 1)(\chi - 3)\chi}{2} + 2\binom{k-2 + \lfloor \frac{\chi-1}{2}\rfloor}{\lfloor \frac{\chi-1}{2}\rfloor} + \binom{k-2 + \lfloor \frac{\chi-1}{2}\rfloor}{\lfloor \frac{\chi-1}{2}\rfloor - 1} + 1;\\
    \textbf{MIX-3} &=& \begin{cases}
   4\binom{k-1 + \lfloor \frac{\chi-1}{2}\rfloor}{\lfloor \frac{\chi-1}{2}\rfloor} + 2\binom{k-1 + \lfloor \frac{\chi-1}{2}\rfloor}{\lfloor \frac{\chi-1}{2}\rfloor - 1} - 1 &\text{if $\chi$ is odd,}\\
   4\binom{k-1 + \lfloor \frac{\chi-1}{2}\rfloor}{\lfloor \frac{\chi-1}{2}\rfloor} + 2\binom{k-1 + \lfloor \frac{\chi-1}{2}\rfloor}{\lfloor \frac{\chi-1}{2}\rfloor - 1} - 1 - \left( 2\binom{k-2 + \lfloor \frac{\chi-1}{2}\rfloor}{\lfloor \frac{\chi-1}{2}\rfloor} + \binom{k-2 + \lfloor \frac{\chi-1}{2}\rfloor}{\lfloor \frac{\chi-1}{2}\rfloor - 1} \right)  &\text{if $\chi$ is even.}
     \end{cases}
\end{eqnarray*}

The estimate \textbf{sys} is the one from Theorem~\ref{theorem:systolic-chromatic}. The estimates \textbf{BB-x} are taken directly from~\cite{berlov2012}. The estimates \textbf{MIX-x} mix the systolic estimates for the size of a metric ball with lemmas from~\cite{berlov2012}. Two of them are stated below and Theorem~\ref{theorem:mix3} is stated in the introduction. The proofs are deferred to the next section.

\begin{corollary}[MIX-1]
\label{corollary:mix1}
Let $X$ be a graph with all odd cycles of length at least $2k+1$ and chromatic number $\chi$. Then the number of vertices of $X$ is at least
\[
\frac{(\chi + k - 2) \ldots (\chi + 2k - 3)}{2^{k - 1} k^k} + 2\binom{k-2 + \lfloor \frac{\chi-1}{2}\rfloor}{\lfloor \frac{\chi-1}{2}\rfloor} + \binom{k-2 + \lfloor \frac{\chi-1}{2}\rfloor}{\lfloor \frac{\chi-1}{2}\rfloor - 1} + 1.
\]    
\end{corollary}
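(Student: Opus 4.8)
The plan is to partition the vertices of $X$ into a metric ball, whose size is delivered by the systolic machinery, and its complement, whose chromatic number is kept large by a colour-reusing argument so that the Berlov--Bogdanov lower bound for $f$ applies to it. Throughout write $q = \lfloor \frac{\chi-1}{2}\rfloor$, and assume $\chi \ge 3$ so that $f(\chi-2,k)$ is meaningful.

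First I would feed the hypotheses into the systolic estimate for the size of a metric ball (the same estimate that underlies Theorem~\ref{theorem:systolic-chromatic}) to obtain a vertex $v$ of $X$ with
\[
|B(v,k-1)| \ge 2\binom{k-2+q}{q} + \binom{k-2+q}{q-1}.
\]
Set $B = B(v,k-1)$. The elementary point here is that $B$ induces a bipartite subgraph: running a breadth-first search from $v$ and colouring each vertex by the parity of its distance to $v$ is proper, because an edge inside a layer $L_i$ would close an odd walk of length $2i+1 \le 2k-1$ lying entirely in $B$, hence an odd cycle of length at most $2k-1$, contradicting the odd-girth hypothesis.

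Next I would show that deleting $B$ costs only one colour, i.e.\ $\chi(X\setminus B) \ge \chi - 1$; this is the input taken from \cite{berlov2012}. Let $A$ and $C$ be the two sides of the bipartition of $B$ given by the parity of the BFS distance, labelled so that the outermost layer $L_{k-1}$ lies in $C$. Then $A$ is entirely interior: every neighbour of an $A$-vertex (which has distance at most $k-2$) lies at distance at most $k-1$, hence inside $B$. Given any proper colouring of $X\setminus B$ with colours $\{1,\dots,m\}$, colour the independent set $C$ with one fresh colour $m+1$ and colour the independent set $A$ with colour $1$; this is legitimate, since the neighbours of $A$ all lie in $C$ and now carry colour $m+1$, while $C$ meets the complement only in colours at most $m$. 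Hence $\chi(X)\le m+1$, so $\chi(X\setminus B)\ge \chi-1$. Notice that this step is valid for \emph{every} centre $v$, so it imposes no compatibility condition on the choice made in the previous step.

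Finally I would combine the two halves. The induced subgraph $X\setminus B$ still has all odd cycles of length at least $2k+1$ and, by the above, chromatic number at least $\chi-1$, so it is not $(\chi-2)$-colourable; by the definition of $f$ it therefore has more than $f(\chi-2,k)$ vertices, i.e.\ at least $f(\chi-2,k)+1$. Using the Berlov--Bogdanov bound $f(\chi-2,k)\ge \frac{(\chi+k-2)\ldots(\chi+2k-3)}{2^{k-1}k^k}$ and adding the ball estimate, the disjointness of $B$ and $X\setminus B$ yields exactly the claimed inequality. The main obstacle is the very first step: one must extract from the systolic argument a single vertex $v$ for which $B(v,k-1)$ is provably as large as $2\binom{k-2+q}{q}+\binom{k-2+q}{q-1}$, with $q$ governed by the global chromatic number even though the ball itself is merely bipartite. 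Once that estimate is in hand, the peeling and the invocation of $f$ are routine.
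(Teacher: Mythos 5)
Your proposal is correct and follows essentially the same route as the paper: the paper's proof combines \cite[Lemma~2]{berlov2012} (whose proof, reproduced in Lemma~\ref{lemma:bb2}, is exactly your ball-peeling and recolouring argument showing $\chi(X\setminus B)\ge\chi-1$), the bound of \cite[Theorem~3]{berlov2012} applied to the complement, and the systolic ball estimate \eqref{equation:ball-estimate-d}. You simply inline the proof of the lemma and work directly with the graph $X$ rather than phrasing the step through the extremal functions $f$ and $d$, which changes nothing in substance.
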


\begin{corollary}[MIX-2]
\label{corollary:mix2}
Let $X$ be a graph with all odd cycles of length at least $2k+1$ and chromatic number $\chi$. Then the number of vertices of $X$ is at least
\[
(\chi - 2) + \frac{(k - 1)(\chi - 3)\chi}{2} + 2\binom{k-2 + \lfloor \frac{\chi-1}{2}\rfloor}{\lfloor \frac{\chi-1}{2}\rfloor} + \binom{k-2 + \lfloor \frac{\chi-1}{2}\rfloor}{\lfloor \frac{\chi-1}{2}\rfloor - 1} + 1
\]
\end{corollary}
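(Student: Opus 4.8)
The plan is to read Corollary~\ref{corollary:mix2} as a near-verbatim variant of Corollary~\ref{corollary:mix1}. Writing $d:=\lfloor\frac{\chi-1}{2}\rfloor$, both estimates have the shape $B(\chi-1)+G$, where
\[
G := 2\binom{k-2+d}{d}+\binom{k-2+d}{d-1}
\]
is the common \emph{systolic gadget} (the systolic lower bound for the number of vertices of a metric ball of the relevant radius), and $B(\chi-1)$ is a Berlov--Bogdanov-type lower bound for the number of vertices of a graph of odd girth at least $2k+1$ and chromatic number at least $\chi-1$. In Corollary~\ref{corollary:mix1} the slot $B(\chi-1)$ is filled by the polynomial bound \textbf{BB-1} at $\chi-1$; for Corollary~\ref{corollary:mix2} I would instead fill it with \textbf{BB-3} at $\chi-1$. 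Indeed, substituting $\chi-1$ for $\chi$ in \textbf{BB-3} gives $(\chi-2)+\frac{(k-1)(\chi-3)\chi}{2}+1$, and adding $G$ reproduces the claimed expression exactly.

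First I would isolate the decomposition lemma underlying Corollary~\ref{corollary:mix1}: from a graph $X$ of odd girth at least $2k+1$ and chromatic number $\chi$ one extracts an induced subgraph $X'$ that still has odd girth at least $2k+1$, satisfies $\chi(X')\ge \chi-1$, and for which $|V(X)|\ge |V(X')|+G$. Here the surplus $G$ is precisely the systolic metric-ball estimate, and the chromatic number drops by only one because the vertices removed in passing to $X'$ form an (essentially) independent set relative to a $(\chi-1)$-coloring of $X'$; recall that, the odd girth being at least $2k+1$, every ball of radius at most $k-1$ is bipartite, which is exactly what makes the independent removal of $G$ vertices possible. This lemma is what is proved on the way to Corollary~\ref{corollary:mix1}, and I would quote it unchanged.

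Granting the lemma, the remaining step is immediate. Apply it to $X$ to obtain $X'$ with odd girth at least $2k+1$ and $\chi(X')\ge \chi-1$. Since \textbf{BB-3} is a valid lower bound on the vertex count of any such graph and is monotone increasing in the chromatic number over the relevant range, we get $|V(X')|\ge \textbf{BB-3}(\chi-1)=(\chi-2)+\frac{(k-1)(\chi-3)\chi}{2}+1$. Adding the gadget yields $|V(X)|\ge |V(X')|+G\ge \textbf{BB-3}(\chi-1)+G$, which is precisely the asserted bound.

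The main obstacle is not this final combination but the bookkeeping inside the decomposition lemma: one must check that the removed gadget genuinely contains at least $G$ vertices with $d=\lfloor\frac{\chi-1}{2}\rfloor$ (the floor, and its interaction with the parity of $\chi$, is the delicate point, and is the reason the parallel \textbf{MIX-3} splits into two cases) and that the chromatic number falls by exactly one rather than two. I would also treat the small cases separately: for $\chi\le 3$ the quadratic term $\frac{(k-1)(\chi-3)\chi}{2}$ is non-positive, so the inequality must be compared directly with the trivial lower bounds (a graph with $\chi=2$ is bipartite and has at least two vertices, and so on), and one should confirm that \textbf{BB-3} may legitimately be evaluated at $\chi-1$ throughout the remaining range.
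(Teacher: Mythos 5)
Your proposal is correct and is essentially the paper's own proof: the paper obtains \textbf{MIX-2} in exactly this way, combining the ball-removal decomposition (Lemma~\ref{lemma:bb2}, i.e.\ Lemma~2 of Berlov--Bogdanov) with their Theorem~2 (the bound \textbf{BB-3}) evaluated at $\chi-1$, and then bounding the removed ball from below by the systolic estimate \eqref{equation:ball-estimate-d}. The only cosmetic difference is that you phrase the decomposition at the level of graphs while the paper phrases it via the extremal functions $f(m,k)$ and $d(m,k-1)$; also, your worry about a parity case split is unnecessary here, since the gadget bound is applied once with $d=\lfloor\frac{\chi-1}{2}\rfloor$ (the split only appears in \textbf{MIX-3}, where the recursion is iterated).
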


\begin{table}[h!]
\begin{tabular}{l|l|l|l|l|l|l|l|l|l|}
\cline{2-10}
                                  & $k = 2$ & $k = 3$ & $k = 4$ & $k = 5$ & $k=6$ & $k = 7$ & $k = 8$ & $k = 9$ & $k = 10$ \\ \hline
\multicolumn{1}{|l|}{$\chi = 3$}  & BB-2    & BB-2    & BB-3    & BB-3    & BB-3  & BB-3    & BB-3    & BB-3    & BB-3     \\ \hline
\multicolumn{1}{|l|}{$\chi = 4$}  & BB-3    & BB-3    & BB-3    & BB-3    & BB-3  & BB-3    & BB-3    & BB-3    & BB-3     \\ \hline
\multicolumn{1}{|l|}{$\chi = 5$}  & BB-3    & BB-3    & BB-3    & BB-3    & BB-3  & sys     & sys     & sys     & sys      \\ \hline
\multicolumn{1}{|l|}{$\chi = 6$}  & BB-3    & BB-3    & BB-3    & BB-3    & BB-3  & BB-3    & BB-3    & BB-3    & BB-3     \\ \hline
\multicolumn{1}{|l|}{$\chi = 7$}  & BB-3    & BB-3    & BB-3    & sys     & sys   & sys     & sys     & sys     & sys      \\ \hline
\multicolumn{1}{|l|}{$\chi = 8$}  & BB-3    & BB-3    & BB-3    & BB-3    & BB-3  & sys     & sys     & sys     & sys      \\ \hline
\multicolumn{1}{|l|}{$\chi = 9$}  & BB-3    & BB-3    & BB-3    & sys     & sys   & sys     & sys     & sys     & sys      \\ \hline
\multicolumn{1}{|l|}{$\chi = 10$} & BB-3    & BB-3    & BB-3    & sys     & sys   & sys     & sys     & sys     & sys      \\ \hline
\multicolumn{1}{|l|}{$\chi = 11$} & BB-3    & BB-3    & sys     & sys     & sys   & sys     & sys     & sys     & sys      \\ \hline
\multicolumn{1}{|l|}{$\chi = 12$} & BB-3    & BB-3    & BB-3    & sys     & sys   & sys     & sys     & sys     & sys      \\ \hline
\multicolumn{1}{|l|}{$\chi = 13$} & BB-3    & BB-3    & sys     & sys     & sys   & sys     & sys     & sys     & sys      \\ \hline
\multicolumn{1}{|l|}{$\chi = 14$} & BB-3    & BB-3    & sys     & sys     & sys   & sys     & sys     & sys     & sys      \\ \hline
\multicolumn{1}{|l|}{$\chi = 15$} & BB-3    & BB-3    & sys     & sys     & sys   & sys     & sys     & sys     & sys      \\ \hline
\end{tabular}
\medskip
\caption{Comparison between \cite{berlov2012} and the systolic approach}
\end{table}

\begin{table}[]
\begin{tabular}{l|l|l|l|l|l|l|l|l|l|}
\cline{2-10}
                                  & $k = 2$ & $k = 3$ & $k = 4$ & $k = 5$ & $k=6$ & $k = 7$ & $k = 8$ & $k = 9$ & $k = 10$ \\ \hline
\multicolumn{1}{|l|}{$\chi = 3$}  & MIX-3   & MIX-3   & MIX-3   & MIX-3   & MIX-3 & MIX-3   & MIX-3   & MIX-3   & MIX-3    \\ \hline
\multicolumn{1}{|l|}{$\chi = 4$}  & BB-3    & BB-3    & BB-3    & BB-3    & BB-3  & BB-3    & BB-3    & BB-3    & BB-3     \\ \hline
\multicolumn{1}{|l|}{$\chi = 5$}  & MIX-3   & MIX-3   & MIX-3   & MIX-3   & MIX-3 & MIX-3   & MIX-3   & MIX-3   & MIX-3    \\ \hline
\multicolumn{1}{|l|}{$\chi = 6$}  & BB-3    & BB-3    & MIX-2   & MIX-2   & MIX-2 & MIX-2   & MIX-2   & MIX-2   & MIX-2    \\ \hline
\multicolumn{1}{|l|}{$\chi = 7$}  & MIX-3   & MIX-3   & MIX-3   & MIX-3   & MIX-3 & MIX-3   & MIX-3   & MIX-3   & MIX-3    \\ \hline
\multicolumn{1}{|l|}{$\chi = 8$}  & BB-3    & BB-3    & MIX-2   & MIX-2   & MIX-2 & MIX-3   & MIX-3   & MIX-3   & MIX-3    \\ \hline
\multicolumn{1}{|l|}{$\chi = 9$}  & BB-3    & MIX-3   & MIX-3   & MIX-3   & MIX-3 & MIX-3   & MIX-3   & MIX-3   & MIX-3    \\ \hline
\multicolumn{1}{|l|}{$\chi = 10$} & BB-3    & MIX-2   & MIX-2   & MIX-3   & MIX-3 & MIX-3   & MIX-3   & MIX-3   & MIX-3    \\ \hline
\multicolumn{1}{|l|}{$\chi = 11$} & BB-3    & MIX-3   & MIX-3   & MIX-3   & MIX-3 & MIX-3   & MIX-3   & MIX-3   & MIX-3    \\ \hline
\multicolumn{1}{|l|}{$\chi = 12$} & BB-3    & MIX-2   & MIX-3   & MIX-3   & MIX-3 & MIX-3   & MIX-3   & MIX-3   & MIX-3    \\ \hline
\multicolumn{1}{|l|}{$\chi = 13$} & BB-3    & MIX-3   & MIX-3   & MIX-3   & MIX-3 & MIX-3   & MIX-3   & MIX-3   & MIX-3    \\ \hline
\multicolumn{1}{|l|}{$\chi = 14$} & BB-3    & MIX-2   & MIX-3   & MIX-3   & MIX-3 & MIX-3   & MIX-3   & MIX-3   & MIX-3    \\ \hline
\multicolumn{1}{|l|}{$\chi = 15$} & BB-3    & MIX-3   & MIX-3   & MIX-3   & MIX-3 & MIX-3   & MIX-3   & MIX-3   & MIX-3    \\ \hline
\end{tabular}
\medskip
\caption{Comparison between all approaches, including the mixed one}
\end{table}

The estimate \textbf{BB-3} is better than the systolic estimate \cite{berlov2012} for small values of $\chi$ and $k$. For large values of $\chi$ and $k$, the estimates \textbf{BB-1} and \textbf{BB-2} are better than the degree $3$ polynomial \textbf{BB-3}, but the systolic estimate overcomes them.

When the mixed estimates are included, \textbf{MIX-2} sometimes improves the intermediate range of $\chi$ or $k$. The mixed approach \textbf{MIX-3} beats the systolic approach for large values of $\chi$ and $k$.

\section{Proofs of the mixed estimates}

In the proof of Theorem \ref{theorem:systolic-chromatic} and in \cite{berlov2012} one uses the notion of a metric ball in a graph.

\begin{definition}
Let the metric ball $B(x,i) \subseteq V(X)$ be the subset of vertices at edge-path distance at most $i$ from $x$. Let $\langle B(x,i) \rangle$ denote the subgraph of $X$ induced in the vertex set $B(x,i)$.
\end{definition}

For the integer $d(X,k-1) = \max_{x \in V(X)} |B(x,k-1)|$  (under the assumption that all odd cycles of $X$ have length at least $2k+1$) in \cite[Lemma~1]{berlov2012} it is proved that 
\begin{equation}
d(X, k-1) \ge (k-1) (\chi(X)-1) + 1. 
\end{equation}

% No need to repeat BB-2 here.
%In \cite[Lemma~2]{berlov2012} it is proved that
%\begin{equation}
%\label{equation::bb-estimate2}
%|V(X)| \geq f(\chi - 2, k) + d(X,k) + 1 \geq \frac{(\chi + k - 2) \ldots (\chi + 2k - 3)}{2^{k - 1} k^k} + (\chi - 1)(k-1) + 2. 
%\end{equation}

Let us also make a definition of the estimate of the size of metric balls in terms of their chromatic number and odd cycles. It was implicit in \cite{berlov2012}, but here we make it explicit.

\begin{definition}
Let $d(m,k-1) = \min_{X}\max_{x \in V(X)} |B(x,k-1)|$, where the minimum is taken over graphs with all odd cycles of length at least $2k+1$ having no proper coloring in $m$ colors.
\end{definition}

%The main problem of these estimates on the number of vertices is the coefficient $\frac{1}{k^k}$, which make them decrease quickly when $\chi$ is fixed and $k$ grows.

The systolic approach (the estimate in Theorem~\ref{theorem:ball-graph} (a)) improves the estimate for the number of vertices of a metric ball of radius $k-1$ (under the assumption that all odd cycles of $X$ have length at least $2k+1$) to
\begin{equation}
\label{equation:ball-estimate-d}
|d(X, k-1)| \geq 2\binom{k-2 + \lfloor \frac{\chi(X)-1}{2}\rfloor}{\lfloor \frac{\chi(X)-1}{2}\rfloor} + \binom{k-2 + \lfloor \frac{\chi(X)-1}{2}\rfloor}{\lfloor \frac{\chi(X)-1}{2}\rfloor - 1},
\end{equation}
or more generally
\begin{equation}
\label{equation:ball-estimate-dmk}
|d(m, k-1)| \geq 2\binom{k-2 + \lfloor \frac{m}{2}\rfloor}{\lfloor \frac{m}{2}\rfloor} + \binom{k-2 + \lfloor \frac{m}{2}\rfloor}{\lfloor \frac{m}{2}\rfloor - 1}.
\end{equation}

In order to combine the systolic approach and \cite[Lemma~2]{berlov2012} we need to restate \cite[Lemma~2]{berlov2012} in our terms.

\begin{lemma}[Rephrased Lemma~2 of \cite{berlov2012}]
\label{lemma:bb2}
\begin{equation}
\label{equation::bb-estimate3}
f(m,k) \geq f(m - 1, k) + d(m,k-1).
\end{equation}
\end{lemma}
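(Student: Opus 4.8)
The plan is to unwind the definition of $f(m,k)$: it suffices to prove that every graph $X$ with all odd cycles of length at least $2k+1$ and $|V(X)| \le f(m-1,k) + d(m,k-1)$ admits a proper $m$-coloring, since $f(m,k)$ is by definition the largest threshold with this property. I argue by contradiction, so suppose $X$ is such a graph that is \emph{not} $m$-colorable (we may assume $m \ge 2$). Because $X$ is non-$m$-colorable and has odd girth at least $2k+1$, the definition of $d(m,k-1)$ as a minimum over exactly such graphs guarantees a vertex $x_0$ with $|B(x_0,k-1)| \ge d(m,k-1)$; I fix such an $x_0$ and write $B = B(x_0,k-1)$.

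First I record two structural facts about $B$ that follow from the odd-girth hypothesis. Every edge of $\langle B\rangle$ connects two vertices whose distances to $x_0$ differ by at most one; it cannot join two vertices at the same distance $i \le k-1$, as that would close an odd walk of length $2i+1 \le 2k-1$ through $x_0$ and hence produce an odd cycle of length at most $2k-1$, contradicting the hypothesis. Therefore every edge joins two consecutive distance-layers, the distance-parity coloring is proper, and $\langle B\rangle$ is bipartite with independent parts $A$ (even distance) and $C$ (odd distance). Moreover, every vertex at distance at most $k-2$ from $x_0$ has all of its neighbors inside $B$, so only the outermost layer, at distance exactly $k-1$, can have neighbors in $U := V(X)\setminus B$. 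That layer lies entirely in one of the two parts, say $P \in \{A,C\}$; I call the other part $Q$, which is then contained in the insulated interior.

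The heart of the argument is the claim that $\langle U\rangle$ is \emph{not} $(m-1)$-colorable. Suppose it were, with colors $\{1,\dots,m-1\}$. I extend this to all of $X$ using one extra color $m$: assign color $m$ to every vertex of $P$ and color $1$ to every vertex of $Q$. This coloring is proper: $P$ and $Q$ are independent, so no monochromatic edge lies inside either; any edge between $P$ and $Q$ joins colors $m$ and $1$; the vertices of $Q$ have no neighbors in $U$, so recycling color $1$ there creates no conflict; and the vertices of $P$ with neighbors in $U$ carry color $m$, which is absent from the palette used on $U$. Hence $X$ would be $m$-colorable, a contradiction.

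It remains to assemble the counts. As an induced subgraph of $X$, the graph $\langle U\rangle$ again has all odd cycles of length at least $2k+1$, and it is non-$(m-1)$-colorable; by the definition of $f(m-1,k)$ this forces $|U| \ge f(m-1,k)+1$. Adding the guaranteed ball size gives $|V(X)| = |U| + |B| \ge f(m-1,k) + 1 + d(m,k-1)$, contradicting the assumed bound $|V(X)| \le f(m-1,k)+d(m,k-1)$ and proving the lemma. The delicate point, and the step I expect to need the most care, is the color bookkeeping in the extension: a ball of radius $k-1$ is genuinely $2$-chromatic, so a naive removal would cost two fresh colors, but the insulation of the interior lets the inner part $Q$ reuse an existing color while confining the single new color $m$ to the boundary part $P$ — which is exactly what keeps the total at $m$ and matches the stated reduction from $m$ to $m-1$.
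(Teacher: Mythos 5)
Your proof is correct and follows essentially the same route as the paper: locate a ball $B(x_0,k-1)$ of size at least $d(m,k-1)$ via the definition of $d$, exploit the odd-girth hypothesis to two-color the ball by distance parity with the single new color $m$ confined to the layers of boundary parity, and invoke the definition of $f(m-1,k)$ on the complement. The only difference is cosmetic — you run the contradiction through ``the complement is not $(m-1)$-colorable, hence too large,'' whereas the paper argues ``the complement is small, hence $(m-1)$-colorable, hence $X$ is $m$-colorable'' — which is the same argument in contrapositive form.
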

\begin{proof}
This is essentially the proof of \cite[Lemma~2]{berlov2012}, given here for completeness. Assume the contrary: $f(m,k) \leq f(m - 1, k) + d(m,k-1) - 1$. This means that there exists a graph $X$ with $f(m - 1, k) + d(m,k-1)$ vertices with all odd cycles of length at least $2k+1$ and no proper coloring in $m$ colors. 

By definition of $d(m,k-1)$ there exists a metric ball $B(x,k-1)$ of $X$ with at least $d(m,k-1)$ vertices. Then the complement of this ball (as a set of vertices) induces a graph $X'$ with at most $f(m-1,k)$ vertices. Since $X'$ obviously has all odd cycles of length at least $2k+1$, the definition of $f(m-1,k)$ implies that it is properly colorable in $m-1$ colors. 

Color the vertices of $B(x,k-1)$ so that the vertices at distance $k-1,k-3,k-5,\ldots$ from $x$ are colored in the new color $m$, others are colored in one of the old colors. The ball is then properly colored since its vertices of the same color could only be connected by forming an odd cycle of length at most $2k-1$. The subgraph $X'$ is already properly colored, and the edges between the ball and $X'$ are all properly colored since they are colored in one of the old colors and the new color $m$.

This means that $X$ has a proper coloring in $m$ colors, a contradiction. 
\end{proof}

We extract a part of the above proof that we will need separately as the following.
\begin{lemma}
\label{lemma-even-balls}
Let $X$ be a graph with all odd cycles of length at least $2k+1$. Then any metric ball in $X$ of radius $k-1$ contains no odd cycle.
\end{lemma}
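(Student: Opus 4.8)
The plan is to prove the statement directly by showing that the induced subgraph $\langle B(x,k-1)\rangle$ is bipartite, which immediately forbids odd cycles. The natural bipartition comes from the breadth-first layers around the center $x$: assign to each vertex $v \in B(x,k-1)$ the parity of its distance $d(x,v)$, putting vertices at even distance in one class and those at odd distance in the other. Since $X$ has no loops, every edge $uv$ of $X$ satisfies $|d(x,u)-d(x,v)| \le 1$, so the only way this coloring can fail to be proper is through an edge whose two endpoints lie in the same layer.

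First I would suppose, for contradiction, that the ball contains an odd cycle; equivalently, that the parity coloring is not proper, i.e.\ that there is an edge $uv$ with both endpoints in $B(x,k-1)$ and $d(x,u) = d(x,v) = r$ for some $1 \le r \le k-1$. (Equality of distances is forced because the two values differ by at most one yet share a parity, and $r \ge 1$ because $u \ne v$.) Concatenating a shortest path from $x$ to $u$, the edge $uv$, and a shortest path from $v$ back to $x$ produces a closed walk in $X$ of odd length $2r+1 \le 2k-1$.

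The key step is then the standard observation that a closed walk of odd length contains an odd cycle whose length does not exceed that of the walk: decompose the walk into cycles by repeatedly extracting a cycle at a repeated vertex; the lengths of these cycles sum to $2r+1$, so at least one of them is odd, and that cycle has length at most $2r+1 \le 2k-1$. This is an odd cycle of $X$ of length strictly less than $2k+1$, contradicting the hypothesis that all odd cycles of $X$ have length at least $2k+1$. Hence no monochromatic edge exists, the parity coloring is a proper $2$-coloring of $\langle B(x,k-1)\rangle$, and so the ball contains no odd cycle.

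The only point requiring care — and the part I expect to be the true, if modest, obstacle — is controlling the length of the extracted odd cycle: it is essential both that the closed walk has length at most $2k-1$ and that the cycle pulled out of it is no longer than the walk itself, since it is precisely the gap between $2k-1$ and the forbidden threshold $2k+1$ that produces the contradiction. The radius bound $r \le k-1$ (rather than $r \le k$) is exactly what makes this gap appear, so the constant in the radius must be tracked carefully.
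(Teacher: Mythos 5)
Your proof is correct and takes essentially the same route as the paper: the paper's one-line proof invokes the parity-of-distance two-coloring of the ball from the proof of Lemma~\ref{lemma:bb2}, whose properness rests on exactly the observation you spell out, namely that a monochromatic edge inside the ball would produce an odd closed walk, hence an odd cycle, of length at most $2k-1$. You have merely made explicit the standard step (odd closed walk contains an odd cycle of no greater length) that the paper leaves implicit.
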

\begin{proof}
The coloring of the ball in two colors from the previous proof is proper and shows that there is no odd cycle inside this ball.
\end{proof}

\begin{proof}[Proof of Corollary~\ref{corollary:mix1}]
Using \cite[Lemma~2]{berlov2012} and \cite[Theorem~3]{berlov2012} we can get an estimate:
\[
\frac{(\chi + k - 2) \ldots (\chi + 2k - 3)}{2^{k - 1} k^k} + d(X,k) + 1.
\]
Then we estimate $d$ using \eqref{equation:ball-estimate-d}
\[
|V(X)| \geq \frac{(\chi + k - 2) \ldots (\chi + 2k - 3)}{2^{k - 1} k^k} + 2\binom{k-2 + \lfloor \frac{\chi-1}{2}\rfloor}{\lfloor \frac{\chi-1}{2}\rfloor} + \binom{k-2 + \lfloor \frac{\chi-1}{2}\rfloor}{\lfloor \frac{\chi-1}{2}\rfloor - 1} + 1.
\]
\end{proof}

\begin{proof}[Proof of Corollary~\ref{corollary:mix2}]
We use \cite[Lemma~2]{berlov2012} and \cite[Theorem~2]{berlov2012} to get an estimate:
\[
|V(X)|\ge (\chi - 2) + \frac{(k - 1)(\chi - 3)\chi}{2} + d(X,k) + 1.
\]
Then we estimate $d(X,k)$ using \eqref{equation:ball-estimate-d}
\[
|V(X)| \geq (\chi - 2) + \frac{(k - 1)(\chi - 3)\chi}{2} + 2\binom{k-2 + \lfloor \frac{\chi-1}{2}\rfloor}{\lfloor \frac{\chi-1}{2}\rfloor} + \binom{k-2 + \lfloor \frac{\chi-1}{2}\rfloor}{\lfloor \frac{\chi-1}{2}\rfloor - 1} + 1
\]
\end{proof}

\begin{proof}[Proof of Theorem~\ref{theorem:mix3}]
Using Lemma~\ref{lemma:bb2} and the systolic estimate for the number of vertices of a metric ball \eqref{equation:ball-estimate-dmk}, we recursively estimate the function $f$:
\begin{multline*}
        f(m,k) \geq 1 + \sum \limits_{c = 2}^m d(c, k-1) \geq \sum \limits_{c = 1}^m \left(  2\binom{k-2 + \lfloor \frac{c}{2}\rfloor}{\lfloor \frac{c}{2}\rfloor} + \binom{k-2 + \lfloor \frac{c}{2}\rfloor}{\lfloor \frac{c}{2}\rfloor - 1} \right) = \\ = 
\begin{cases}
   4\binom{k-1 + \lfloor \frac{m}{2}\rfloor}{\lfloor \frac{m}{2}\rfloor} + 2\binom{k-1 + \lfloor \frac{m}{2}\rfloor}{\lfloor \frac{m}{2}\rfloor - 1} - 2 &\text{if $m$ is even,}\\
   4\binom{k-1 + \lfloor \frac{m}{2}\rfloor}{\lfloor \frac{m}{2}\rfloor} + 2\binom{k-1 + \lfloor \frac{m}{2}\rfloor}{\lfloor \frac{m}{2}\rfloor - 1} - 2 - \left( 2\binom{k-2 + \lfloor \frac{m}{2}\rfloor}{\lfloor \frac{m}{2}\rfloor} + \binom{k-2 + \lfloor \frac{m}{2}\rfloor}{\lfloor \frac{m}{2}\rfloor - 1} \right)  &\text{if $m$ is odd.}
\end{cases}
\end{multline*}
     
If we take $m=\chi - 1$ in the above formulas, we obtain the lower bound for the number of vertices under the hypothesis of the theorem:
\begin{multline*}
|V| \geq f(\chi - 1, k) + 1 \geq \\
\geq \begin{cases}
   4\binom{k-1 + \lfloor \frac{\chi-1}{2}\rfloor}{\lfloor \frac{\chi-1}{2}\rfloor} + 2\binom{k-1 + \lfloor \frac{\chi-1}{2}\rfloor}{\lfloor \frac{\chi-1}{2}\rfloor - 1} - 1 &\text{if $\chi$ is odd,}\\
   4\binom{k-1 + \lfloor \frac{\chi-1}{2}\rfloor}{\lfloor \frac{\chi-1}{2}\rfloor} + 2\binom{k-1 + \lfloor \frac{\chi-1}{2}\rfloor}{\lfloor \frac{\chi-1}{2}\rfloor - 1} - 1 - \left( 2\binom{k-2 + \lfloor \frac{\chi-1}{2}\rfloor}{\lfloor \frac{\chi-1}{2}\rfloor} + \binom{k-2 + \lfloor \frac{\chi-1}{2}\rfloor}{\lfloor \frac{\chi-1}{2}\rfloor - 1} \right)  &\text{if $\chi$ is even.}
 \end{cases}
\end{multline*}
\end{proof}

\begin{remark}
The proof of Theorem \ref{theorem:base-case-gen} (implying Theorem~\ref{theorem:systolic-chromatic}) also uses certain induction. But Theorem \ref{theorem:mix3} wins because in the case of a graph $X$ it has approximately $\chi(X)$ induction steps, while Theorem \ref{theorem:base-case-gen} has approximately $\chi(X)/2$ induction steps.
\end{remark}

\section{Discrete systolic inequalities}
\label{section:discrete-sys}

Our proof of Theorem \ref{theorem:systolic-chromatic} is a small modification of the proof of \cite[Theorem~1.4]{avvakumov2021systolic}. Let us recall the crucial notions from that paper.

\begin{definition}
The \emph{(edge-path) systole} $\sys X$ of a simplicial complex $X$ is the smallest integer such that any closed path along the edges of $X$ of edge-length less than $\sys X$ is null-homotopic. If every component of $X$ is simply connected, the systole is, by convention, $+\infty$.
\end{definition}

Note that for a graph $X$ (a $1$-dimensional simplicial complex) the systole $\sys X$ is just the length of the shortest cycle, since no cycle in a graph is contractible.

\begin{definition}
A subset $Y\subset V(X)$ is called \emph{inessential} if the natural map $\pi_1(C)\to \pi_1(D)$ is trivial for every connected component $C$ of $\langle Y\rangle$ and every connected component $D$ of $X$.
\end{definition}

Note that an inessential subset of the vertices of a graph is a subset on which the graph induces a forest.

\begin{definition}
\label{definition:combinatorially-n-essential}
A complex $X$ is called \emph{combinatorially $n$-essential} if its vertex set cannot be partitioned into $n$ inessential sets or fewer.
\end{definition}

\begin{theorem}[Avvakumov, Balitskiy, Hubard, Karasev, \cite{avvakumov2021systolic}]
\label{theorem:a-la-gromov}
Let $X$ be a combinatorially $n$-essential simplicial complex, $n\ge 1$. Then the number of vertices of $X$ is at least
\[
%{n+\left\lfloor\frac{\sys X}{2} \right\rfloor \choose n} \geq \frac{1}{n!} \left\lceil\frac{\sys X}{2} \right\rceil^n.
\binom{n + \left\lfloor\frac{\sys X}{2}\right\rfloor - 1}{n-1} + 2 \binom{n + \left\lfloor\frac{\sys X}{2}\right\rfloor - 1}{n} - 1 \ge \binom{n + \left\lfloor\frac{\sys X}{2} \right\rfloor}{n} \ge \frac{1}{n!} \left\lceil\frac{\sys X}{2} \right\rceil^n.
\]
\end{theorem}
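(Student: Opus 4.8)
The plan is to prove only the first (leftmost) inequality, since the other two are routine. Put $m=\lfloor \sys X/2\rfloor$ and $g(n,m)=\binom{n+m-1}{n-1}+2\binom{n+m-1}{n}-1$, so the goal is $|V(X)|\ge g(n,m)$. The middle inequality follows from Pascal's rule $\binom{n+m-1}{n-1}+\binom{n+m-1}{n}=\binom{n+m}{n}$, which rewrites the bound as $\binom{n+m}{n}+\binom{n+m-1}{n}-1$, together with $\binom{n+m-1}{n}\ge 1$; here $m\ge 1$ because an $n$-essential complex with $n\ge1$ contains a non-contractible loop, of length at least $3$ in a simplicial complex. The rightmost inequality is $\binom{n+m}{n}=\frac{(m+1)\cdots(m+n)}{n!}\ge\frac{(m+1)^n}{n!}\ge\frac{\lceil \sys X/2\rceil^n}{n!}$, using $\lceil \sys X/2\rceil\le m+1$.

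The argument rests on two lemmas. Writing $\kappa(X)$ for the least number of inessential sets partitioning $V(X)$, combinatorial $n$-essentiality means exactly $\kappa(X)\ge n+1$. First, every ball $B(v,i)$ with $i\le m-1$ is inessential: a loop in $\langle B(v,i)\rangle$ contracts to $v$ along geodesics through elementary triangles of perimeter $\le 2i+1\le 2m-1<\sys X$, each null-homotopic in $X$ by definition of the systole. Second, deleting an inessential set $Y$ lowers $\kappa$ by at most one, i.e. $\kappa(\langle V(X)\setminus Y\rangle)\ge\kappa(X)-1$, since adjoining $Y$ to a minimal inessential partition of the complement partitions $V(X)$.

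These combine through the Pascal-type recursion $g(n,m)=g(n-1,m)+g(n,m-1)+1$, which I would verify is the identity $\binom{n+m-1}{n-1}+2\binom{n+m-1}{n}=\big(\binom{n+m-2}{n-2}+2\binom{n+m-2}{n-1}\big)+\big(\binom{n+m-2}{n-1}+2\binom{n+m-2}{n}\big)$, an instance of $\binom{a}{b}=\binom{a-1}{b-1}+\binom{a-1}{b}$. Running the induction on $n$: fix a basepoint $v$ and split $V(X)=B(v,m-1)\sqcup W$ with $W$ the complement. Since $B(v,m-1)$ is inessential, the second lemma makes $\langle W\rangle$ combinatorially $(n-1)$-essential, and deleting vertices only raises the systole, so $\lfloor \sys\langle W\rangle/2\rfloor\ge m$; the inductive hypothesis gives $|W|\ge g(n-1,m)$. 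It then remains to produce a single vertex $v$ whose central ball is large, namely $|B(v,m-1)|\ge g(n,m-1)+1$, after which the two counts add to $|V(X)|\ge g(n-1,m)+g(n,m-1)+1=g(n,m)$. The base case $n=1$ is a cycle, which has at least $\sys X\ge 2m=g(1,m)$ vertices.

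The heart of the matter, and the step I expect to fight with, is the ball-growth estimate $|B(v,m-1)|\ge g(n,m-1)+1=\binom{n+m-2}{n-1}+2\binom{n+m-2}{n}$ for a well-chosen $v$; this is precisely the local form recorded in \eqref{equation:ball-estimate-dmk}, with radius $m-1$ in place of $k-1$ and essentiality parameter $n$ in place of $\lfloor m/2\rfloor$. I would prove it by a Gromov-style ``small balls force small category'' argument intertwined with induction on the radius: if every ball $B(v,m-1)$ were smaller than this bound, one would assemble the balls into at most $n$ inessential sets and contradict $n$-essentiality, while sphere-by-sphere growth supplies the binomial count. Pinning down this covering construction, and simultaneously selecting a basepoint whose ball realizes the bound, is the genuinely delicate part; the clean outer split above reduces the whole theorem to it.
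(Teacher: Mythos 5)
Your arithmetic skeleton is sound (the recursion $g(n,m)=g(n-1,m)+g(n,m-1)+1$ checks out, as do the two routine inequalities and your Lemma 2 about $\kappa$ dropping by at most one), but two load-bearing steps fail. First, the claim that ``deleting vertices only raises the systole'' is false for simplicial complexes of dimension $\ge 2$, and the theorem is stated for complexes, not just graphs: removing vertices also removes the $2$-simplices containing them, so a loop that was null-homotopic in $X$ can become non-contractible in the induced subcomplex. (Already the vertex set of a short null-homotopic induced cycle in a triangulated surface of huge systole gives an induced subcomplex whose systole is far smaller than $\sys X$.) Hence $\sys\langle W\rangle\ge \sys X$ is unjustified, and your appeal to the inductive hypothesis for $\langle W\rangle$ with the same parameter $m$ collapses. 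This is exactly why the paper's induction (the Appendix proof of Theorem~\ref{theorem:base-case-gen}) is not phrased in terms of the systole of subcomplexes but in terms of inessentiality of \emph{ambient} metric balls (the homotopy triviality radius of a covering): that property is monotone under passing to induced subcomplexes, since a ball of $\langle Z\rangle$ sits inside the corresponding ball of $X$, and a subset of an inessential set is inessential.

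Second, the step you yourself flag as the heart of the matter --- producing a vertex $v$ with $|B(v,m-1)|\ge g(n,m-1)+1$ --- is not proved; the sketch ``if all balls were small, assemble them into at most $n$ inessential sets'' is a hope, not an argument, and no mechanism for that assembly is given. Since your outer split reduces the entire theorem to this estimate, the proof is missing its core. The paper's route is different and avoids both problems at once: take $Z\subseteq V(X)$ \emph{minimal} such that $V(X)\setminus Z$ is inessential; $\langle Z\rangle$ is then $(n-1)$-essential, and the induction hypothesis (in the stronger, ball-growth form: for all radii simultaneously, not just the total vertex count) yields a vertex $x\in Z$ with $|B(x,j)\cap Z|\ge b_{n-1}(j)$ for all $j$. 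Because the sphere $S(x,j+1)$ separates $B(x,j)$ from the rest of $X$, the exchanged set $Z\cup S(x,j+1)\setminus B(x,j)$ still has inessential complement, so minimality of $Z$ forces $|S(x,j+1)\setminus Z|\ge |B(x,j)\cap Z|$; summing over spheres gives $|B(x,i)|\ge\sum_{0\le j\le i}b_{n-1}(j)=b_n(i)$, and the theorem is the case $i=r+1$. So the good basepoint and the large ball are produced by one counting argument rather than sought separately; if you want to rescue your outline, you should strengthen your inductive statement to this ball-growth form about ambient balls and run the minimal-$Z$ exchange, rather than splitting off one ball and inducting on its complement.
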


This theorem may be directly applied to graphs, but unlike the estimates we are aiming at, it does not distinguish between the even and odd cycles. In the next section we explain the appropriate modification.

\section{Systolic inequalities relative to a covering}
\label{section:proofs}

In order to handle cycles of odd length and ignore cycles of even length, we need a generalization of the discrete systolic inequality, also from \cite{avvakumov2021systolic}. If one is only interested in the case of graphs then it makes sense to read the following general statement replacing ``simplicial complex'' by ``graph''.

\begin{definition}
Assume that a simplicial complex $X$ has a covering map $\pi : \widetilde X\to X$. A subset $Y\subset V(X)$ is \emph{$\pi$-inessential} if the restriction $\pi^{-1}(\langle Y\rangle) \to \langle Y\rangle$ is a trivial cover. We assume that a cover over $\langle Y \rangle$ is trivial if and only if it is trivial (equal to $C\times D$ with a discrete set $D$) over every connected component $C$ of $\langle Y \rangle$.
\end{definition}

\begin{definition}
\label{definition:n-ess-cover}
Assume that a simplicial complex $X$ has a covering map $\pi : \widetilde X\to X$. This covering map is called combinatorially $n$-essential if the vertex set of $X$ cannot be partitioned into $n$ or fewer $\pi$-inessential sets.
\end{definition}

\begin{definition}
\label{definition:triviality-radius}
We say that a covering map $\pi : \widetilde X\to X$ has \emph{homotopy triviality radius $r$} if every metric ball $B(x,r) \subseteq V(X)$ is $\pi$-inessential and $r$ is the maximum integer with this property.
\end{definition}

We now use the following theorem.

\begin{theorem}[Avvakumov, Balitskiy, Hubard, Karasev, \cite{avvakumov2021systolic}]
\label{theorem:base-case-gen}
Let a simplicial complex $X$ have a covering map $\pi : \widetilde X\to X$ and let $\pi$ be combinatorially $n$-essential. Let $r$ be the homotopy triviality radius of $\pi$. Then there exists a vertex $x\in X$ such that for any $i=0,\ldots,r+1$  the number of vertices in the ball $B(x,i)$ is at least $b_n(i)$, where positive integers $b_n(i)$ satisfy the following recursive relations:
\begin{itemize}
\item $b_1(i) = 2i+1$ for any $i=0,\ldots,r$ and $b_1(r+1)= 2r+2$;
\item $b_n(i) = \sum_{0\le j\le i}b_{n-1}(j)$ for any $i=0,\ldots,r+1$.
\end{itemize}
%In particular, $b_n(i) \ge {n+i \choose i}$ for any $i=0,\ldots,r+1$.
In particular, $b_n(i) \ge 2\binom{i + n -1}{n} + \binom{i + n-1}{n-1}$ for any $i=0,\ldots,r$, and $b_n(r+1)\ge 2\binom{r + n}{n} + \binom{r + n}{n-1} - 1$.
\end{theorem}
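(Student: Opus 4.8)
The plan is to prove the recursive bounds $b_n$ by induction on $n$ and then extract the two closed forms from the recursion; the geometric content lives entirely in the decomposition of a ball into distance spheres. Writing $S(x,j) := B(x,j)\setminus B(x,j-1)$ for the sphere of radius $j$, we have $|B(x,i)| = \sum_{j=0}^{i} |S(x,j)|$, so the recursion $b_n(i)=\sum_{0\le j\le i} b_{n-1}(j)$ will follow once we produce a vertex $x$ with $|S(x,j)| \ge b_{n-1}(j)$ for every $j$. The whole argument parallels the proof of the absolute statement, Theorem~\ref{theorem:a-la-gromov}, with ``$\pi$-inessential'' replacing ``inessential'' throughout.

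For the base case $n=1$, combinatorial $1$-essentiality says that $V(X)$ itself is not $\pi$-inessential, so there is a closed edge-path whose lift to $\widetilde X$ is not closed. Let $\gamma$ be a shortest such $\pi$-essential loop; a shortest one is a simple cycle, since a repeated vertex would split $\gamma$ into two loops one of which is already $\pi$-essential and strictly shorter. I would first note that $\gamma$ has length $\ell\ge 2r+2$: if $\ell\le 2r+1$ then, for any vertex $x$ on $\gamma$, every vertex of $\gamma$ would lie within distance $r$ of $x$, so $\gamma \subseteq B(x,r)$ and $B(x,r)$ would be $\pi$-essential, contradicting that $r$ is the homotopy triviality radius. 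Fixing any $x\in\gamma$, the arc of $\gamma$ of $2i$ edges centred at $x$ consists of distinct vertices all lying in $B(x,i)$: for $i\le r$ we have $2i<\ell$, so this arc has $2i+1$ vertices and $|B(x,i)|\ge 2i+1=b_1(i)$; for $i=r+1$ we have $2i=2r+2\le\ell$, so the arc has at least $2r+2$ vertices and $|B(x,r+1)|\ge 2r+2=b_1(r+1)$.

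The inductive step passes from $n-1$ to $n$. Given that $\pi$ is combinatorially $n$-essential with triviality radius $r$, the goal is to choose a basepoint $x$ for which each sphere $\langle S(x,j)\rangle$, equipped with the covering restricted from $\pi$, is combinatorially $(n-1)$-essential with induced triviality radius at least $j-1$; the inductive hypothesis, applied to the sphere and using monotonicity of $b_{n-1}$, then yields $|S(x,j)| \ge b_{n-1}(j)$, and summing over $j\le i$ gives the recursion. The essentiality of the spheres is forced by $n$-essentiality of $X$: if some sphere admitted a partition into $n-1$ $\pi$-inessential pieces, one would merge this partition with the radial structure of the distance function — the inner region and the outer shells regrouping, via the fact that every ball of radius $\le r$ is $\pi$-inessential, into one further $\pi$-inessential set — to obtain a partition of $V(X)$ into $n$ $\pi$-inessential sets, contradicting Definition~\ref{definition:n-ess-cover}.

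Finally, the two displayed closed forms follow from the recursion by a second induction on $n$ together with the hockey-stick identities $\sum_{j=0}^{i}\binom{j+n-2}{n-1}=\binom{i+n-1}{n}$ and $\sum_{j=0}^{i}\binom{j+n-2}{n-2}=\binom{i+n-1}{n-1}$, which turn $\sum_{j\le i} b_{n-1}(j)$ into $2\binom{i+n-1}{n}+\binom{i+n-1}{n-1}$; the only bookkeeping is that the single $-1$ in $b_1(r+1)=2r+2$ is carried once through each summation, producing the lone $-1$ in $b_n(r+1)$ while leaving $b_n(i)$ for $i\le r$ offset-free. The main obstacle is the essentiality reduction in the inductive step: setting up the covering induced on a distance sphere, verifying that its triviality radius is large enough for the inductive hypothesis to deliver the full count $b_{n-1}(j)$ rather than something smaller, and turning partitions of the inner ball, the sphere, and the outer shells into a genuine $\pi$-inessential partition of all of $V(X)$. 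For general simplicial complexes this van Kampen / cut-locus bookkeeping is exactly the delicate point; for graphs it simplifies considerably, which is all the present applications require.
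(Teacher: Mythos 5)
Your base case and the closed-form computations at the end match the paper, but the inductive step rests on a claim that is false, so there is a genuine gap. You propose to choose $x$ so that every sphere $\langle S(x,j)\rangle$, with the covering restricted to it, is combinatorially $(n-1)$-essential, and to deduce $|S(x,j)|\ge b_{n-1}(j)$ from the inductive hypothesis applied to the sphere. No such $x$ need exist. Take for $X$ any triangle-free graph of large chromatic number (Mycielski's construction provides these) and for $\pi$ the parity double cover used in Section~\ref{section:proofs}; then $\pi$ is combinatorially $n$-essential with $n=\lfloor(\chi-1)/2\rfloor$ arbitrarily large, and $r=k-1\ge 1$, yet for \emph{every} vertex $x$ the sphere $S(x,1)$ induces an edgeless subgraph (two adjacent neighbours of $x$ would form a triangle), which is $\pi$-inessential; so no sphere of radius $1$ is even $1$-essential, and the inductive hypothesis gives you nothing there. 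Your justification of the essentiality claim fails at exactly the point you flag as ``delicate'': after partitioning $S(x,j)$ into $n-1$ $\pi$-inessential pieces, the leftover set is $B(x,j-1)\cup\bigl(V(X)\setminus B(x,j)\bigr)$, and while $B(x,j-1)$ is $\pi$-inessential for $j-1\le r$, the outer region $V(X)\setminus B(x,j)$ has no reason to be: it can carry many loops that do not lift to loops, and by the paper's convention a disconnected set is inessential only if every component is. Separation by the sphere therefore does not let you ``regroup'' the inner ball and the outer shells into one inessential set, and no contradiction with Definition~\ref{definition:n-ess-cover} arises. (The classical merging-by-parity trick only shows that \emph{some single} sphere has chromatic number at least $\chi/2$, which is both weaker than $(n-1)$-essentiality and not simultaneous over all radii around one center, so this route cannot be patched to give the stated bounds.)

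The paper's proof is organized precisely to avoid this trap: it never claims spheres are essential. Instead it takes a \emph{smallest} set $Z\subset V(X)$ whose complement $Y$ is $\pi$-inessential. The restriction of $\pi$ to $\langle Z\rangle$ is then combinatorially $(n-1)$-essential — this is where the induction is applied — producing a vertex $x$ with $|B(x,i)\cap Z|\ge b_{n-1}(i)$ for all $i\le r+1$. The sphere counts come not from essentiality of spheres but from minimality of $Z$: the swapped set $Z':=Z\cup S(x,i+1)\setminus B(x,i)$ still has $\pi$-inessential complement, and here the separation argument is legitimate because the outer part of the new complement lies inside $Y$, which is inessential \emph{by construction}; minimality then forces $|S(x,i+1)\setminus Z|\ge|B(x,i)\cap Z|$. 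Summing these bounds, $|B(x,i)|=|B(x,i)\cap Z|+\sum_{0\le j\le i}|S(x,j)\setminus Z|\ge\sum_{0\le j\le i}|B(x,j)\cap Z|\ge\sum_{0\le j\le i}b_{n-1}(j)=b_n(i)$. Note that the vertices counted in the spheres lie \emph{outside} $Z$ and form no essential structure at all; this change of strategy, not a refinement of your sphere argument, is what makes the induction go through.
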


Its particular case for graphs, odd cycles, and chromatic number is the following.

\begin{theorem}
\label{theorem:ball-graph}
Let $X$ be a graph with all odd cycles of length at least $2k+1$ and chromatic number $\chi$. Then 

a$)$ some metric ball of radius $k-1$ in $X$ has at least 
\[
b_n(k-1) \ge 2\binom{k-2+\left\lfloor \frac{\chi-1}{2}\right\rfloor}{k-2} + \binom{k-2+\left\lfloor \frac{\chi-1}{2}\right\rfloor}{k-1} = 2\binom{k-2+\left\lfloor \frac{\chi-1}{2}\right\rfloor}{\left\lfloor \frac{\chi-1}{2}\right\rfloor} + \binom{k-2+\left\lfloor \frac{\chi-1}{2}\right\rfloor}{\left\lfloor \frac{\chi-1}{2}\right\rfloor-1}
\]
vertices; 

b$)$ and some metric ball of radius $k$ in $X$ has at least 
\[
b_n(k)\ge 2\binom{k-1+\left\lfloor \frac{\chi-1}{2}\right\rfloor}{k-1} + \binom{k-1+\left\lfloor \frac{\chi-1}{2}\right\rfloor}{k} - 1
\]
vertices.
\end{theorem}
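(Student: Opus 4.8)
The plan is to deduce this statement directly from Theorem~\ref{theorem:base-case-gen} by choosing the covering map that detects odd cycles. We may assume $X$ is connected, replacing it by the component realizing $\chi$ if necessary; this preserves both the odd-girth hypothesis and the conclusion, which concerns a single ball. Let $\pi\colon\widetilde X\to X$ be the double covering associated with the homomorphism $\pi_1(X)\to\Z/2$ that sends each loop to its edge-length modulo $2$ (the bipartite double cover). The restriction $\pi^{-1}(\langle Y\rangle)\to\langle Y\rangle$ is trivial over a connected component $C$ precisely when every cycle of $C$ has even length, so a subset $Y\subseteq V(X)$ is $\pi$-inessential if and only if the induced subgraph $\langle Y\rangle$ contains no odd cycle.

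First I would pin down the essentiality parameter $n$ by relating $\pi$-inessential partitions to proper colourings. On one hand, a proper colouring of $X$ with $\chi$ colours splits the colour classes into $\lceil\chi/2\rceil$ pairs, and the union of each pair (two independent sets) induces a bipartite, hence odd-cycle-free, subgraph; a lone leftover class when $\chi$ is odd is itself independent. Thus $V(X)$ partitions into $\lceil\chi/2\rceil$ $\pi$-inessential sets. On the other hand, a partition into $m$ odd-cycle-free sets yields a proper $2m$-colouring by $2$-colouring each part with its own pair of colours, so $\chi\le 2m$ and no partition into fewer than $\lceil\chi/2\rceil$ $\pi$-inessential parts exists. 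Hence the smallest such partition has exactly $\lceil\chi/2\rceil$ parts, and $\pi$ is combinatorially $n$-essential for the largest admissible value
\[
n=\lceil\chi/2\rceil-1=\left\lfloor\tfrac{\chi-1}{2}\right\rfloor,
\]
which is the quantity appearing in the target bounds.

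Next I would bound the homotopy triviality radius $r$ from below. By Lemma~\ref{lemma-even-balls}, under the hypothesis that all odd cycles have length at least $2k+1$, every metric ball of radius $k-1$ induces an odd-cycle-free subgraph and is therefore $\pi$-inessential. Consequently $r\ge k-1$. (If $X$ actually attains odd girth $2k+1$, a ball of radius $k$ about a vertex of a shortest odd cycle contains that whole cycle, so typically $r=k-1$, but only the inequality $r\ge k-1$ is needed.)

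Finally I would feed $n=\lfloor(\chi-1)/2\rfloor$ and $r\ge k-1$ into Theorem~\ref{theorem:base-case-gen}. For part (a) the index $i=k-1$ satisfies $i\le r$, so the interior estimate $b_n(i)\ge 2\binom{i+n-1}{n}+\binom{i+n-1}{n-1}$ applies and gives the displayed bound after substituting $i=k-1$; the two written forms agree by the symmetry $\binom{k-2+n}{n}=\binom{k-2+n}{k-2}$. For part (b) the index $i=k$ satisfies $i\le r+1$. If $r=k-1$ the boundary estimate $b_n(r+1)\ge 2\binom{r+n}{n}+\binom{r+n}{n-1}-1$ gives exactly the claimed value, while if $r\ge k$ the interior estimate gives the same expression without the $-1$, which is only larger; either way the stated lower bound holds, since $b_n$ is monotone in $n$. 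The main obstacle is the bookkeeping of the second paragraph: verifying that the optimal number of odd-cycle-free parts equals $\lceil\chi/2\rceil$ and that $\lceil\chi/2\rceil-1=\lfloor(\chi-1)/2\rfloor$, together with the degenerate range $\chi\le 2$ (where $n=0$ and the graph is already bipartite) which must be excluded or checked separately.
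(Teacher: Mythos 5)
Your proposal is correct and follows essentially the same route as the paper: the bipartite double cover $\pi$, the identification of $\pi$-inessential sets with odd-cycle-free induced subgraphs, the bound $n=\lfloor(\chi-1)/2\rfloor$ via pairing colour classes, the triviality radius $r\ge k-1$ from Lemma~\ref{lemma-even-balls}, and then Theorem~\ref{theorem:base-case-gen} for parts (a) and (b). Your extra care about the case $r\ge k$ (where the interior estimate is only stronger) and the degenerate range $\chi\le 2$ is sound and slightly more scrupulous than the paper's phrasing, but it is bookkeeping rather than a different argument.
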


\begin{proof}[Proof of Theorem \ref{theorem:systolic-chromatic} assuming Theorem~\ref{theorem:ball-graph}]
This is actually Theorem~\ref{theorem:ball-graph} (b).
\end{proof}

\begin{proof}[Proof of Theorem \ref{theorem:ball-graph} assuming Theorem~\ref{theorem:base-case-gen}]
First, it makes sense to pass to a connected component of the graph $X$ in order to speak about its fundamental group $G:=\pi_1(X)$. Consider its (normal) subgroup $H\subseteq G$ consisting of the based cycles (called \emph{loops} in topology) of even length. In other words, $H$ is the kernel of the map $G\mapsto \mathbb F_2$ sending a loop to the parity of its length. The universal cover $\widetilde X\to X$ then produces the cover $\overline X = \widetilde X/H$ with its double covering map $\pi : \overline X\to X$. This is the covering map to which we apply the above definitions and Theorem~\ref{theorem:base-case-gen}. Note that a change of the base point does not change this covering, because adding a path in front of a cycle and the same reversed path after the cycle keeps the parity of its length.

Note that a cycle $L\subset G$ can be lifted to $\overline X$ as a cycle if and only if it has even length. Therefore a subset $Y\subset V(X)$ is $\pi$-inessential if and only if it induces no odd cycle. Equivalently, $Y$ induces a bipartite subgraph of $X$.

Definition \ref{definition:n-ess-cover} then means that $V(X)$ cannot be partitioned into $n$ or fewer sets that induce bipartite subgraphs. From the hypothesis of the theorem we know that the chromatic number is $\chi$ and want to bound $n$ from below. Evidently, if Definition \ref{definition:n-ess-cover} fails for some $n$ then we may color each of $n$ or fewer induced bipartite subgraphs in two colors, totally spending $2n$ colors for a proper coloring of $X$. This implies $2n\ge \chi$.
The opposite inequality
\[
2n < \chi\Leftrightarrow 2n \le \chi - 1 \Leftrightarrow n \le \left\lfloor \frac{\chi-1}{2}\right\rfloor 
\] 
therefore implies the validity of Definition \ref{definition:n-ess-cover}. In particular, $X$ is $n$-essential with $n = \left\lfloor \frac{\chi-1}{2}\right\rfloor$.

The hypothesis that every odd cycle has length at least $2k+1$ and Lemma~\ref{lemma-even-balls} imply that we may use the radius $r=k-1$ in Definition \ref{definition:triviality-radius} for $\pi$. Now we apply Theorem \ref{theorem:base-case-gen} and obtain the two needed estimates.
\end{proof}

\begin{remark}[Hinted by Vladimir Dol'nikov]
\label{remark:essentiality-chromatic}
In the above proof we have established the relation between the essentiality of a graph $X$ and its chromatic number as
\[
n(X) \ge \left\lfloor \frac{\chi(X)-1}{2}\right\rfloor = \left\lceil \frac{\chi(X)-2}{2}\right\rceil = \left\lceil \frac{\chi(X)}{2}\right\rceil - 1.
\]
Assume now that $X$ is colored in $\chi(X)$ colors. Joining these colors in pairs (one may have no pair if the chromatic number is odd) we partition its vertices into $n = \lceil \chi(X)/2\rceil$ parts so the every part induces a bipartite graph. That is, each of the $n$ parts induces either a bipartite graph or a graph with no edges, each part therefore having no odd cycles. This implies that
\[
n(X) < \left\lceil \chi(X)/2\right\rceil \Leftrightarrow n(X) \le  \left\lceil \frac{\chi(X)}{2}\right\rceil - 1.
\]
In view of these two inequalities the essentiality of a graph (with respect to odd cycles) is in fact a function of its chromatic number, $n(X) = \left\lceil \frac{\chi(X)}{2}\right\rceil - 1$.
\end{remark}

\begin{remark}[Hinted by Vladimir Dol'nikov]
\label{remark:essentiality-forests}
Theorem \ref{theorem:a-la-gromov} can be applied to a graph $X$ directly, not distinguishing the odd and even cycles. Then the number of vertices of a graph will be estimated from below based on the length of its smallest cycle $\sys X$ and $n'=n'(X)$, the smallest $n'$ such that the vertices of $X$ can be partitioned into $n'+1$ parts so that every part induces a forest. Since $\sys X$ is no greater than the length of the smallest odd cycle, this estimate may improve Theorem \ref{theorem:systolic-chromatic} only if $n'(X) > n(X) = \left\lceil \frac{\chi(X)}{2}\right\rceil - 1$. This is so for sufficiently big bipartite and tripartite graphs, but Theorem \ref{theorem:a-la-gromov} is not very interesting in this case since those graphs do contain short cycles. 
\end{remark}

\section{Appendix}

In order to make this exposition self-contained, we literally reproduce the proof of Theorem~\ref{theorem:base-case-gen} from \cite{avvakumov2021systolic}. The term \emph{loop} here is used for a cycle in a graph.

Let us first prove the theorem for $n=1$. In that case, consider a simple (passing once through each of its vertices) closed loop, whose lift to $\widetilde X$ is not closed. Such a loop exists since otherwise $X$ would be inessential. The loop has at least $2r+2$ vertices, because otherwise it would fit into a ball of radius $r$ containing only loops lifting to loops. The equalities $b_1(i) = 2i+1$ for any $i=0,\ldots,r$ and $b_1(r+1)= 2r+2$ then easily follow.

We proceed by induction over $n$. Assume that $n>1$ and that the theorem is already proven for smaller $n$. Find a smallest subset of vertices $Z\subset V(X)$ such that its complement $Y:=V(X)\setminus Z$ is $\pi$-inessential.

By definition, the restriction of the covering map, $\pi^{-1}(\langle Z\rangle)\to \langle Z\rangle$, is then combinatorially $(n-1)$-essential. The metric balls of $Z$ are contained in the respective metric balls of $X$ and the $\pi$-inessentiality property of the balls is therefore preserved. Hence the homotopy triviality radius of the restriction of the covering map is no smaller than that of $\pi$.

By the induction hypothesis, there exists a vertex $x\in Z$ such that for any $i=0,\ldots,r+1$ the number of vertices in $B(x,i)\cap Z$ is at least $b_{n-1}(i)$.

Consider the following modification of $Z$:
\[
Z':=Z\cup S(x,r+1)\setminus B(x, r).
\]
Let us prove that $Y':=V(X)\setminus Z'$ is $\pi$-inessential.

Assume to the contrary, that there is a simple closed edge-path $P$ in $\langle Y' \rangle$ whose lift to $\widetilde X$ is not closed. The set of vertices $S(x,r+1)$ separates the $1$-skeleton of $X$ into two disconnected components with vertex sets $B(x,r)$ and $V(X)\setminus B(x,r+1)$, respectively. Since $P$ is simple and does not contain vertices of $S(x,r+1)$, we have that
\begin{itemize}
\item
either $P\subset \langle B(x,r)\rangle$;
\item
or $P\subset \langle Y'\setminus B(x,r+1)\rangle$.
\end{itemize}
In the first case, $P$ lifts to a closed path in $\widetilde X$ because $\pi$ has the homotopy triviality radius $r$. In the second case, $P$ lifts to a closed path in $\widetilde X$ because $Y'\setminus B(x,r+1)\subseteq Y$ and $Y$ is $\pi$-inessential.

Since $Y'$ is $\pi$-inessential, from the minimality of $Z$, we obtain that
\[
|Z| \leq |Z'|,
\]
and therefore
\[
|S(x,r+1)\setminus Z| \ge |B(x,r)\cap Z|.
\]
Analogously, for every $i \leq r$ we obtain that
\[
|S(x,i+1)\setminus Z| \ge |B(x,i)\cap Z|.
\]

Finally, for any $i=0,\ldots,r+1$ we have that
\begin{multline}
\label{equation:ball-estimate}
|B(x,i)| = |B(x,i)\cap Z| + \sum_{0\le j\le i}|S(x,j)\setminus Z| \ge \\
\ge \sum_{0\le j\le i}|B(x,j)\cap Z| \ge \sum_{0\le j\le i} b_{n-1}(j)=b_n(i).
\end{multline}

It remains to prove the inequality $b_n(i) \ge 2\binom{i + n -1}{n} + \binom{i + n-1}{n-1}$. The case $n=1$ reads
\[
b_1(i) \ge 2i + 1,
\]
and has been already shown in the beginning of the proof.

For $n>1$ we have:
\begin{multline*}
b_n(i) = \sum_{0\le j\le i}b_{n-1}(j) \ge\\
\ge \sum_{0\le j\le i} \left( 2\binom{j + n - 2}{n-1} + \binom{j + n - 2}{n-2}\right) = 2\binom{i + n -1}{n} + \binom{i + n-1}{n-1}.
\end{multline*}

For $b_n(r+1)$, as compared to $b_n(i)$ with $i=0,\ldots,r$, the estimate is $1$ less in case $n=1$, as was described in the beginning of the proof. In the course of summation this $-1$ summand carries on, hence we have
\[
b_n( r+1) \ge 2\binom{r + n}{n} + \binom{r + n}{n-1} - 1.
\]

\bibliography{../Bib/karasev}
\bibliographystyle{abbrv}

\end{document}